\theoremstyle{plain}
\newtheorem{thm}{Theorem}[section]
\newtheorem{ques}[thm]{Question}
\theoremstyle{definition}
\newtheorem{dfn}[thm]{Definition}
\newtheorem{expl}[thm]{Example}
\theoremstyle{remark}
\newtheorem{rem}[thm]{Remark}
\newtheorem*{acknowledgement}{Acknowledgment}
\def\mm{\frak m}
\def\nn{\frak n}
\def\LR{\Longrightarrow}%
\def\Hom{\operatorname{Hom}}
\def\gr{\operatorname{gr}}
\begin{document}

\title{Upper bound of multiplicity of F-rational rings and F-pure rings}

\author{Craig Huneke}
\address{Department of Mathematics\\
University of Virginia\\
Charlottesville, VA 22904 USA}
\email{huneke@virginia.edu}

\author{Kei-ichi Watanabe}
\address{Department of Mathematics, College of Humanities and Sciences, 
Nihon University, Setagaya-ku, Tokyo 156--0045, Japan}
\email{watanabe@math.chs.nihon-u.ac.jp}

\thanks{The first author is partially supported by NSF
grant 1259142. The second author is partially supported by Grant-in-Aid for Scientific
Research 20540050
and Individual Research Expense of College of Humanity and Sciences, Nihon University.
We thank AIM for the opportunity to work together.}

\subjclass [2010]{Primary: 13H15, Secondary: 13C99, 14B05}

\keywords{}

\maketitle

\begin{abstract} This paper answers in the affirmative a question raised by Karl Schwede
concerning an upper bound on the multiplicity of F-pure rings.

\end{abstract}

\maketitle

\section{Introduction}

In the problem session of the workshop at AIM, August 2011, titled \lq\lq Relating 
Test Ideals and Multiplier Ideals", Karl Schwede, inspired by the work of Stefan 
Helmke \cite{He},  posed the following question.
\vskip 0.5cm 
Let $(R, \mm)$ be a Noetherian local ring of characteristic $p>0$ of dimension $d$ and 
embedding dimension $v$. 
Assume that  $R$ is F-pure. Then does the multiplicity $e(R)$ of $R$ always satisfy 
$$e(R) \le \left(\begin{array}{c} v\\ d \end{array}\right)\quad ?$$ 

We will prove this inequality is  true, and follows from a Brian\c con-Skoda type theorem. Our results can
be used to give an alternate proof of one of the main results of \cite{ST},  bounding the number of F-pure
centers in an F-pure F-finite local ring, which was one of the reasons for asking the motivating question. 
See Remark \ref{Fpurecenter}.
\par

\section{Preliminaries}

Let $(R,\mm)$ be either a Noetherian local ring or $R=\oplus_{n\ge 0} R_n$ be a 
graded ring finitely generated over a field $R_0=k$.    
We always assume that either $R$ contains a field of characteristic $p>0$ or 
$R$ is essentially of finite type over a field of characteristic $0$. 
{\bf We always assume that our ring $R$ is reduced.}

\begin{dfn} We denote by $R^{\circ}$ the set
of elements of $R$ that are not contained in any minimal prime
ideal. The \textit{tight closure} $I^*$ of $I$ is defined to be the ideal
of $R$ consisting of all elements $x \in R$ for which there exists
$c \in R^{\circ}$ such that $cx^q \in I^{[q]}$ for all large
$q=p^e$.
\end{dfn}

\begin{dfn} We say that a local ring $(R,\mm)$ is  \textit{F-rational} if 
 it is a homomorphic image of a Cohen-Macaulay ring and 
for every parameter ideal $J$ of $R$ we have $J^{*}=J$. 
It is known that F-rational rings are normal and Cohen-Macaulay.  
\end{dfn}

\begin{dfn} Assume that $R$ contains a field of characteristic $p>0$ and
$q=p^e$ be a power of $p$. 
\begin{enumerate}
\item For a power $q= p^e$ and ideal $I$ in $R$, we denote by $I^{[q]}$, 
the ideal generated by $\{a^q \;|\; a\in I\}$.
\item We write $R^{1/q}$  
then we say that 
$R$ is {\it F-pure} if for every $R$ module $M$, 
the natural map $M= M\otimes_R R \to M\otimes _R R^{1/p}$, sending $x\in M$ 
to $x\otimes 1$ is injective. 
\item Let $I$ be an ideal of $R$ and $x\in R$.
If $R$ is F-pure and if $x^q \in I^{[q]}$, then $x\in I$.  This follows from (2) 
if we put $M= R/I$.  
\end{enumerate}
\end{dfn}

\section{The main results}

The following theorem is our main result in this article.

\begin{thm}\label{e(R)} Let $(R,\mm)$ be a Noetherian local ring 
with $\dim R =d$ and embedding dimension $v$.  
Then, 
\begin{enumerate}
\item If $R$ is a rational singularity or F-rational, then $e(R) \le \left(\begin{array}{c} v-1\\ d-1 
\end{array}\right).$
\item If $R$ is F-pure, then $e(R) \le \left(\begin{array}{c} v\\ d \end{array}\right)$.
\end{enumerate}
\end{thm}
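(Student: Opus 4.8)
The plan is to derive both bounds from a single Hilbert–function estimate on a minimal reduction $J$ of $\mathfrak m$, fed by a Brian\c con--Skoda-type containment whose strength depends on the hypothesis.

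\medskip\noindent\emph{Set-up and the counting lemma.} First reduce to the case of infinite residue field (replace $R$ by $R(t)=R[t]_{\mathfrak m R[t]}$, which changes none of $d$, $v$, $e(R)$, nor F-purity/F-rationality), and fix a minimal reduction $J=(x_1,\dots,x_d)$ of $\mathfrak m$. Since $J$ is a reduction, $G:=\gr_{\mathfrak m}(R)$ is module-finite over the subalgebra generated by the leading forms $\bar x_1,\dots,\bar x_d\in G_1=\mathfrak m/\mathfrak m^2$; as $\dim G=d$ these form a homogeneous system of parameters and in particular are linearly independent in $G_1$. Hence $G':=G/(\bar x_1,\dots,\bar x_d)G$ is an Artinian standard graded $k$-algebra minimally generated by $v-d$ linear forms, so $\dim_k G'_i\le\binom{v-d-1+i}{i}$. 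The elementary observation is then: if $\mathfrak m^{s+1}\subseteq J$ for some $s\ge 0$, then, writing $\lambda(-)$ for length,
$$e(R)=e(J;R)\le\lambda(R/J)=\sum_{i=0}^{s}\lambda\!\left(\frac{\mathfrak m^i+J}{\mathfrak m^{i+1}+J}\right)\le\sum_{i=0}^{s}\dim_k G'_i\le\sum_{i=0}^{s}\binom{v-d-1+i}{i}=\binom{v-d+s}{s},$$
using $e(J;R)\le\lambda(R/J)$ for the parameter ideal $J$, the $\mathfrak m$-adic filtration of $R/J$, the identification $\frac{\mathfrak m^i+J}{\mathfrak m^{i+1}+J}\cong\mathfrak m^i/(\mathfrak m^{i+1}+(\mathfrak m^i\cap J))$ as a quotient of $G'_i$, and the hockey-stick identity. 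Thus everything is reduced to proving $\mathfrak m^{d}\subseteq J$ in case (1) (take $s=d-1$, giving $\binom{v-1}{d-1}$) and $\mathfrak m^{d+1}\subseteq J$ in case (2) (take $s=d$, giving $\binom{v}{d}$).

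\medskip\noindent\emph{The F-rational case.} An F-rational ring is Cohen--Macaulay and its parameter ideals are tightly closed, so $J^{*}=J$. Since $J$ is generated by $d$ elements, the tight-closure Brian\c con--Skoda theorem of Hochster--Huneke gives $\overline{J^{d}}\subseteq J^{*}=J$; combined with $\mathfrak m\subseteq\overline{J}$ (so $\mathfrak m^{d}\subseteq\overline{J}^{\,d}\subseteq\overline{J^{d}}$) this yields $\mathfrak m^{d}\subseteq J$. For a ring essentially of finite type over a field of characteristic $0$ with rational singularities, reduce to characteristic $p$, where the reduction is F-rational, and apply the same estimate.

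\medskip\noindent\emph{The F-pure case, and the main obstacle.} Again $\mathfrak m^{d+1}\subseteq\overline{J^{d+1}}$, so it suffices to prove the Brian\c con--Skoda-type containment $\overline{J^{d+1}}\subseteq J$ when $R$ is F-pure and $J$ is generated by $d=\dim R$ elements with $\sqrt J=\mathfrak m$. Hochster--Huneke only gives $\overline{J^{d+1}}\subseteq (J^{2})^{*}\subseteq J^{*}$, and since F-pure rings need not be weakly F-regular one cannot simply replace the tight closure by $J$; improving this is exactly where F-purity must be used, and I expect it to be the hard part. F-purity gives $J=J^{F}$ (item (3) of the Definitions), which suggests arguing at the level of Frobenius powers. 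Concretely: $J^{[q]}$ is a reduction of $J^{q}$, so $\overline{(J^{[q]})^{d+1}}=\overline{J^{(d+1)q}}\supseteq\bigl(\overline{J^{d+1}}\bigr)^{[q]}$, and applying Hochster--Huneke to $J^{[q]}$ (generated by $d$ elements, exponent $1+d$) together with a suitable uniform test/Brian\c con--Skoda element $c$ yields $c\,x^{q}\in J^{[q]}$ for every $x\in\overline{J^{d+1}}$ and all large $q$. Taking $q$-th roots gives $c^{1/q}x\in J\,R^{1/q}$, and applying an $R$-linear splitting $\varphi\colon R^{1/q}\to R$ (which exists, after the usual reduction to the F-finite case, by F-purity) gives $x\,\varphi(c^{1/q})\in J$. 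The delicate point — the heart of the argument — is to choose $c$ inside the test ideal and choose $q$ so that $\varphi(c^{1/q})$ can be taken to be a unit, equivalently so that $c^{1/q}$ spans a free $R$-summand of $R^{1/q}$; reconciling the Brian\c con--Skoda element with the available F-splitting is the step I expect to require real work. Once it is arranged, $x\in J$, hence $\mathfrak m^{d+1}\subseteq J$, and the displayed estimate finishes the proof.
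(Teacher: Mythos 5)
Your reduction of both bounds to the containments $\mathfrak m^{d}\subseteq J$ and $\mathfrak m^{d+1}\subseteq J$, the counting of $\lambda(R/J)$ via the hockey-stick identity, and the F-rational case via the tight-closure Brian\c con--Skoda theorem all match the paper's argument and are correct. The problem is the F-pure case, and it is exactly the step you flag yourself: you need to arrange that $\varphi(c^{1/q})$ is a unit for a splitting $\varphi\colon R^{1/q}\to R$ and a fixed Brian\c con--Skoda element $c$. But the existence of some $\varphi$ with $\varphi(c^{1/q})$ a unit (equivalently, $c^{1/q}$ generating a free summand of $R^{1/q}$) for a given non-unit $c$ is essentially the definition of strong F-regularity along $c$; F-purity only guarantees a splitting sending $1^{1/q}\mapsto 1$, and for a general F-pure ring no such $\varphi$ exists once $c\in\mathfrak m$. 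So as written the argument proves the containment only under a strictly stronger hypothesis (under which $J^*=J$ and one would get the better bound of case (1) anyway), and the route through splittings cannot be completed. This is a genuine gap.

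The paper closes it with a much more elementary device: the auxiliary element $c$ is cancelled \emph{before} Frobenius enters, so that only the naive Frobenius-closure property of F-pure rings is needed. Concretely, for $I$ generated by $r$ elements and containing a nonzerodivisor, take $x\in\overline{I^{r+1}}$ and $c\in R^{\circ}$ with $cx^{N}\in I^{(r+1)N}$ for all large $N$. Then $cx^{N}\in cR\cap I^{(r+1)N}$, and the Artin--Rees lemma gives a fixed $k$ with $cR\cap I^{(r+1)N}\subseteq cI^{(r+1)N-k}$ for all large $N$. Since $c$ is a nonzerodivisor one may cancel it: $x^{N}\in I^{(r+1)N-k}$. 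Taking $N=q=p^{e}$ large enough that $(r+1)q-k\ge rq$, and using the pigeonhole containment $I^{rq}\subseteq I^{[q]}$ for an $r$-generated ideal, one gets $x^{q}\in I^{[q]}$, whence $x\in I$ by F-purity applied to $M=R/I$ (item (3) of the paper's definition). Applying this with $I=J$, $r=d$ gives $\overline{J^{d+1}}\subseteq J$, and your counting argument then finishes the proof. If you replace your splitting argument with this Artin--Rees cancellation, your write-up becomes a complete and correct proof along the same lines as the paper's.
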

 
This theorem easily follows from the following theorem. We recall that a {\it reduction} $J$ of
an ideal $I$ is an ideal $J\subset I$ such that for large $n$,  $I^n = JI^{n-1}$.  A {\it
minimal reduction} is a reduction minimal with respect to inclusion. A fundamental fact is
that in a Noetherian local ring of dimension $d$ having infinite residue class field, minimal
reductions are always generated by at most $d$ elements. See \cite[Section 8.3]{HS}.

\begin{thm}\label{BS}
 Let $(R, \mm)$ be a Noetherian local ring 
with $\dim R =d$ and let $J\subset \mm$ be a minimal reduction of $\mm$.
\begin{enumerate}
\item If $R$ is a rational singularity or F-rational, then $\mm^d \subset J$.  
\item If $R$ is F-pure, then $\mm^{d+1} \subset J$.  
\end{enumerate}
\end{thm}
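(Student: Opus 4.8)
The plan is to deduce both statements from the single fact $\mm\subseteq\overline{J}$, which holds because $J$ is a reduction of $\mm$; taking powers then gives $\mm^{d}\subseteq(\overline{J})^{d}\subseteq\overline{J^{d}}$ and likewise $\mm^{d+1}\subseteq\overline{J^{d+1}}$. Throughout I will assume the residue field of $R$ is infinite, so that $J$ may be taken to be generated by $d$ elements: this is harmless, since passing to $R(t)=R[t]_{\mm R[t]}$ preserves $\dim R$ as well as the hypotheses of being F-rational, F-pure, or having a rational singularity, allows one to replace $J$ by a $d$-generated minimal reduction of $\mm R(t)$ contained in $JR(t)$, and the containments $\mm^{d}\subseteq J$ and $\mm^{d+1}\subseteq J$ descend by faithful flatness.

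For part (1), the ideal $J$ is $\mm$-primary and generated by $d=\dim R$ elements, so it is a parameter ideal. If $R$ is F-rational, the tight closure Brian\c con--Skoda theorem of Hochster--Huneke gives $\overline{J^{d}}\subseteq J^{*}$, and F-rationality gives $J^{*}=J$; hence $\mm^{d}\subseteq\overline{J^{d}}\subseteq J$. If $R$ has a rational singularity (so $\operatorname{char}R=0$), then $R$ is pseudo-rational, and the Brian\c con--Skoda theorem for pseudo-rational local rings (Lipman--Teissier) gives $\overline{J^{d}}\subseteq J$ directly, so again $\mm^{d}\subseteq J$.

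For part (2) I avoid tight closure altogether: the obvious attempt, namely $\mm^{d}\subseteq J^{*}$ from Brian\c con--Skoda, stalls because $J^{*}$ can be strictly larger than $J$ for F-pure rings that are not F-rational. Instead I work with Frobenius powers and use F-purity only at the very end. Let $r$ be the reduction number of $\mm$ with respect to $J$, so that $\mm^{\,r+m}=J^{m}\mm^{\,r}\subseteq J^{m}$ for all $m\ge 0$. Since $J$ has $d$ generators, a pigeonhole argument shows $J^{\,d(q-1)+1}\subseteq J^{[q]}$ for every $q=p^{e}$, whence $\mm^{\,r+d(q-1)+1}\subseteq J^{[q]}$. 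Because $(d+1)q-\bigl(r+d(q-1)+1\bigr)=q+d-r-1\ge 0$ as soon as $q\ge r-d+1$, we obtain $\mm^{\,(d+1)q}\subseteq J^{[q]}$ for all sufficiently large $q=p^{e}$. Fixing one such $q$, every $z\in\mm^{d+1}$ satisfies $z^{q}\in(\mm^{d+1})^{[q]}\subseteq\mm^{\,(d+1)q}\subseteq J^{[q]}$, and F-purity then forces $z\in J$; therefore $\mm^{d+1}\subseteq J$.

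I expect the genuine content to lie in part (1), where everything rests on applying the correct Brian\c con--Skoda theorem in each setting --- the tight closure version of Hochster--Huneke in characteristic $p$, and the Lipman--Teissier theorem for pseudo-rational rings in characteristic $0$ --- together with the standard fact that rational singularities are pseudo-rational. Part (2) is then essentially formal once one notices that the combinatorial ``Frobenius-level'' Brian\c con--Skoda containment $\mm^{(d+1)q}\subseteq J^{[q]}$ is available unconditionally and that F-purity promotes it to an honest inclusion of ideals; the only step needing care there is the reduction to an infinite residue field so that $J$ truly has $d$ generators.
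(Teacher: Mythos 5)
Your proof is correct. Part (1) is handled essentially as in the paper: both arguments simply invoke the tight-closure Brian\c con--Skoda theorem of Hochster--Huneke together with $J^{*}=J$ in the F-rational case, and the Lipman--Teissier theorem for pseudo-rational rings in the rational-singularity case. Part (2), however, takes a genuinely different route. The paper proves the more general statement that $\overline{I^{r+1}}\subseteq I$ for any $r$-generated ideal $I$ of an F-pure ring containing a nonzerodivisor, and then applies it to $I=J$ via $\mm^{d+1}\subseteq\overline{J^{d+1}}$; the mechanism there is the multiplier $c\in R^{\circ}$ from the definition of integral closure, the Artin--Rees lemma to cancel $c$, and the pigeonhole containment $I^{rq}\subseteq I^{[q]}$. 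You bypass integral closure entirely: the reduction number of $\mm$ with respect to $J$ gives $\mm^{\,r+d(q-1)+1}\subseteq J^{\,d(q-1)+1}\subseteq J^{[q]}$ directly, a degree count yields $\mm^{(d+1)q}\subseteq J^{[q]}$ for $q\gg 0$, and F-purity finishes. Your version is more elementary (no Artin--Rees, no need for a nonzerodivisor multiplier, hence no reliance on the standing reducedness hypothesis), and you are more explicit than the paper about reducing to an infinite residue field so that $J$ is $d$-generated. The trade-off is that the paper's lemma is a strictly more general integral-closure statement, of independent interest (it is the F-pure analogue of Proposition 4.9 of \cite{Hu}), whereas your argument is tailored to the pair $(\mm,J)$.
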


\begin{proof}  The statement (1) is well known and follows from Brian\c con-Skoda 
type theorem (cf. \cite{HH}, \cite{LT}).  \par
For the statement (2) we will prove the following statement. \par

Assume $R$ is F-pure and $I$ is an ideal generated by $r$ elements, 
which contains a nonzerodivisor, 
then $\overline{I^{r+1}}\subset I$.  This is sufficient to prove \ref{BS} since 
$\mm^{d+1} \subset \overline{\mm^{d+1}}=\overline{J^{d+1}}$.
\par
Now, take $x\in  \overline{I^{r+1}}$.  Then we can take $c\in R^{\circ}$ such that 
for sufficiently large $N, c x^N \in I^{(r+1)N}$.  Then $c x^N \in c (I^{(r+1)N} : c)$. 
The latter is contained in $cR \cap I^{(r+1)N} $ and by  the Artin-Rees Lemma, 
there exists $k$ such that 
$cR \cap I^{(r+1)N} \subset c I^{(r+1)N - k }$ for sufficiently large $N$. 
Now, we have shown that $c x^N \in  c I^{(r+1)N - k }$.  
Note that $I^{rq} \subset I^{[q]}$.   
Taking sufficiently large $N =q =p^e$ and noting that $c$ is a non zero divisor, 
we get $x^q \in I^{[q]}$. Since $R$ is F-pure, we get $x \in I$. 
This was proved in \cite{Hu},  Proposition 4.9 when $R$ is Cohen-Macaulay.
\end{proof}

It is easy to prove \ref{e(R)}  using \ref{BS}. 
 
\begin{proof}[Proof of \ref{BS} $\LR$ \ref{e(R)}]  We have the following inequality  and 
the equality holds if and only if $R$ is Cohen-Macauly (cf. \cite{BH}, Corollary 4.7,11). 

$$\leqno{(3.1.1)} \qquad e(R) \le l_R(R/J)$$

So, it suffices to show that $l_R(R/J)$ is bounded by  the right-hand side of the 
inequalities in \ref{e(R)}.  Now, let $x_1,\ldots , x_d, y_1,\ldots , y_{v-d}$ be minimal 
generators of $\mm$ with $J = (x_1,\ldots , x_d)$.  Then $R/J$ is generated by 
the monomials of $y_1,\ldots ,y_{v-d}$ of degree $\le d-1$ (resp. degree $\le d$) 
in case (1) (resp. case (2)) by \ref{BS}.  It is easy to see that the number of 
monomials of $y_1,\ldots ,y_{v-d}$ of degree $\le d-1$ (resp. degree $\le d$) 
is $\left(\begin{array}{c} v-1\\ d-1 \end{array}\right)$ (resp.  
$\left(\begin{array}{c} v\\ d \end{array}\right)$).  
\end{proof}

\begin{rem}  Assume we have equality in \ref{e(R)} (1) or (2).  Then 
$R$ is Cohen-Macaulay since we must have equality in (3.1.1), too.
Moreover, since the associated graded ring of $R$ has the same 
embedding dimension and multiplicity with $R$, $\gr_{\mm}(R)$ 
is also Cohen-Macaulay in this case.
\end{rem}

\begin{rem}\label{Fpurecenter} Another class of singularities which would
be natural to consider are  F-injective (respectively, Du Bois) singularities.
 So, it is natural to ask if \ref{BS} is true if we assume $R$ is F-injective or a  
 Du Bois singularity. We do not know the answer. Also, we point out that our \ref{BS} (2) gives another 
 proof of one of the main theorems of  \cite{ST} concerning the number of F-pure
 centers.  Actually, in Theorem 5.10 of \cite{ST}, if $Q_i$ ($1\le i\le N$) are prime 
 ideals of dimension $d$ of $R$ such that every $R/Q_i$ is F-pure, then   
 certainly the number $N$ is bounded by the multiplicity of 
 $R/[\cap_{i=1}^N Q_i]$ and the latter is F-pure. Thus we can apply our 
 \ref{BS}(2).
 We thank the anonymous referee for suggesting these points.
 \end{rem}

\section{Actual Upper Bound}

The upper bound in \ref{e(R)} (2) is taken by the following example. 

\begin{expl} Let $\Delta$ be a simplicial complex on the vertex set $\{ 1,2,\ldots ,v\}$, 
whose maximal faces are all possible $d-1$ simplices.  Then the Stanley-Reisner ring 
$R=k[\Delta]$ has dimension $d$ and  $e(R)=  \left(\begin{array}{c} v\\ d \end{array}\right)$.
Note that Stanley-Reisner rings are always F-pure. 
\end{expl}

\begin{rem}  (1)  Are there other examples where we have equality in \ref{e(R)} (2) 
if $v \ge d+2$ ?  
It is shown in \cite{GW} that in the case of $d=1$, this is the only example if we assume 
$(R, \mm)$ is complete local ring with algebraically closed residue field. \par

\medskip

(2) It is natural to ask if there  
are examples where we have equality in \ref{e(R)} (1) 
if $v \ge d+2$ and $d\ge 3$. If $d=2$, we have always $e(R) = v-1$
(cf. \cite{Li}).  See remark (4) below.  \par

\medskip

(3) It is not difficult to see that the examples which attain the maximal value in Theorem \ref{e(R)} (1) must
be generated by $\left(\begin{array}{c} v-1\\ d 
\end{array}\right)$ elements of degree $d$,
and have defining ideal with a linear resolution.  In fact, let $R$ be  a rational singularity or F-rational
having maximal multiplicity $e(R) = \left(\begin{array}{c} v-1\\ d-1 
\end{array}\right),$ where the dimension of $R$ is $d$ and $v$ is the embedding dimension.
Let $A$ be a general Artinian reduction of $R$, that is to say, $A$ is $R$ modulo a general linear
system of parameters (we assume infinite field here).  Then as the proof of the main theorem
shows, $A\cong k[x_1,...,x_c]/(x_1,...,x_c)^d$, where $c = v-d$ is the embedding codimension of $R$.
It is well-known that $(x_1,...,x_c)^d$ has a linear resolution. Since $R$ must be Cohen-Macaulay,
its defining ideal will also have a linear resolution. 
Observe that the $a$-invariant of $R$ must then be $-1$, since the dimension of $R$ is $d$, and
the socle degrees of the Artinian reduction $A$ are all $d-1$. 

The converse will often be true; if $S$ is a polynomial ring,
and $R'$ a graded F-rational quotient ring (or rational singularity) whose defining ideal $J$ has a linear resolution, then
provided the common degree of the generators of $J$, say $d$, is at most the dimension of $R'$,
then one should be able to cut $R'$ down by general linear forms (at least over an algebraically closed field) to a ring
$R$ such that the dimension of $R$ is exactly $d$ and $R$ is F-rational or has rational singularities on the punctured spectrum
(see, for example, \cite{SZ}).
But then it is enough to check that the a-invariant is negative to prove $R$ is F-rational \cite{Wa, F}. This
follows from the fact $R$ has a linear resolution over the polynomial ring obtained from $S$ by
cutting with the same general linear forms.  The a-invariant of $R$ will be $-1$.  The multiplicity of
$R$ will be exactly  $\left(\begin{array}{c} v-1\\ d-1 
\end{array}\right)$, where $v$ is the embedding dimension of $R$. 

\medskip

(4) To see an explicit  example as in (3), with even an isolated singularity, consider the ideal of maximal minors
of a generic $r$ by $s$ matrix  $X$ over an algebraically closed field of characteristic $0$.  Assume that $2r\leq s+3$.
Let $S$ be the ambient polynomial ring, and let $R' = S/I$, where $I$ is generated by the maximal minors of the
generic matrix $X$. The singular locus of $R'$ is defined by the image of the $r-1$ size minors of $X$, which has
height in $R'$ exactly $(r-(r-1)+1)(s-(r-1)+1)-(s-r+1) = 2(s-r+2)-(s-r+1)= s-r+3$.  Since $r\leq s-r+3$ by assumption,
we can reduce $R'$  modulo $rs-s+2r-1$ general linear forms to reach an $r$-dimensional  ring $R$ with an isolated singularity (see \cite[Satz 5.2]{F2}), defined
by the maximal minors of an $r$ by $s$ matrix $Y$ of linear  forms such that the $r$ by $r$ minors have generic height. 
The $a$-invariant of $R$ is $-1$, so by the results of \cite{F} and \cite{Wa}, $R$ has a rational singularity. The multiplicity of 
$R$ is exactly  $\left(\begin{array}{c} v-1\\ d-1 
\end{array}\right)$, where $v$ is the embedding dimension and $d$ is the dimension of $R$.
\end{rem}

It seems likely that the following question will have a positive answer:

\begin{ques}  Assume that $(R,\mm)$ is a rational singularity or F-rational with dimension $d$ and
embedding dimension $v$ with maximal possible multiplicity (as in Theorem \ref{e(R)}).  
  Then is the associated graded ring $gr_{\mm}(R) $ Cohen-Macaulay having a defining ideal with linear
resolution?\end{ques}

\section{Case of Gorenstein Rings}

If $R$ is Gorenstein, the upper bound is largely reduced by the duality. We prove:

\begin{thm}\label{Gor} Let $(R,\mm)$ be a Gorenstein Noetherian local ring 
with $\dim R =d$ and embedding dimension $v$.

\begin{enumerate}
\item If $R$ is a rational singularity or F-rational with $\dim R = 2r +1$, then 
  $e(R) \le \left(\begin{array}{c} v-r - 1\\  r \end{array}\right)+ 
  \left(\begin{array}{c} v-r-2\\ r-1 \end{array}\right).$
  
\item If $R$ is a rational singularity or F-rational with $\dim R = 2r$, then 
  $e(R) \le 2 \left(\begin{array}{c} v-r - 1\\  r -1.\end{array}\right)$  

\item If $R$ is F-pure with $\dim R= 2r+1$, then $e(R) \le 
2 \left(\begin{array}{c} v-r-1 \\  r 
\end{array}\right)$.
\item If $R$ is F-pure with $\dim R = 2r$, then 
  $e(R) \le \left(\begin{array}{c} v-r \\  r 
\end{array}\right)+ \left(\begin{array}{c} v-r-1\\ r-1 
\end{array}\right).$
\end{enumerate}
\end{thm}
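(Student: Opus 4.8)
The plan is to run the same reduction that derives Theorem \ref{e(R)} from Theorem \ref{BS}, but to estimate the length of the Artinian reduction by Matlis duality rather than by counting monomials --- the point being that here the Artinian reduction is Gorenstein. Exactly as in that proof I would first arrange (passing to $R(t)=R[t]_{\mm R[t]}$ if necessary, which alters none of $d,v,e(R)$ and preserves the Gorenstein, F-rational and F-pure hypotheses) that the residue field is infinite, choose a minimal reduction $J=(x_1,\dots,x_d)$ of $\mm$ with $x_1,\dots,x_d$ part of a minimal generating set of $\mm$, and put $A:=R/J$, $\nn:=\mm/J$. Because $R$ is Gorenstein, hence Cohen--Macaulay, $J$ is generated by a regular sequence, so $A$ is an Artinian Gorenstein local ring with $\dim_k\nn/\nn^{2}=v-d=:c$ and $e(R)=l_R(R/J)=l_A(A)$ (equality in (3.1.1)). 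By Theorem \ref{BS}, $\nn^{s+1}=0$, where $s=d-1$ if $R$ is a rational singularity or F-rational and $s=d$ if $R$ is F-pure.

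The heart of the argument is Matlis self-duality of the Artinian Gorenstein ring $A$: since $A\cong E_A(k)$, for every ideal $I\subset A$ one has $0:_A I=\Hom_A(A/I,A)=\Hom_A(A/I,E_A(k))$ and therefore $l_A(0:_A I)=l_A(A/I)$. As $\nn^{s+1-j}\cdot\nn^{j}=\nn^{s+1}=0$, we get $\nn^{s+1-j}\subset 0:_A\nn^{j}$, hence
\[
 l_A(A/\nn^{j})=l_A(0:_A\nn^{j})\ge l_A(\nn^{s+1-j})=l_A(A)-l_A(A/\nn^{s+1-j}),
\]
so that $l_A(A)\le l_A(A/\nn^{j})+l_A(A/\nn^{s+1-j})$ for every $j$. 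On the other hand $\gr_{\nn}(A)$ is a homomorphic image of a polynomial ring in $c$ variables, so $l_A(A/\nn^{j})=\sum_{i=0}^{j-1}\dim_k(\nn^{i}/\nn^{i+1})\le\binom{c+j-1}{c}$, the number of monomials of degree $\le j-1$ in $c$ variables.

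It then remains to pick $j$ near $(s+1)/2$ and compute. If $s$ is odd, write $s+1=2q$ and take $j=q$: this gives $e(R)\le 2\binom{c+q-1}{c}$. If $s$ is even, write $s+1=2q-1$ and take $j=q-1$: this gives $e(R)\le\binom{c+q-2}{c}+\binom{c+q-1}{c}$. Substituting $c=v-d$, using $s=d-1$ for (1) and (2) and $s=d$ for (3) and (4), and simplifying with the hockey-stick and Pascal identities, one recovers the four displayed bounds; for instance in (1) one has $d=2r+1$, $s=2r$, $q=r+1$ and $\binom{c+q-2}{c}+\binom{c+q-1}{c}=\binom{v-r-1}{r}+\binom{v-r-2}{r-1}$, while in (3) one has $d=2r+1$, $s=2r+1$, $q=r+1$ and $2\binom{c+q-1}{c}=2\binom{v-r-1}{r}$.

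I do not expect a serious obstacle: the proof is short once one realizes that Gorenstein duality should be used in place of a symmetry of the Hilbert function of $\gr_{\nn}(A)$, which is in general unavailable since for a non-graded Artinian Gorenstein ring that Hilbert function need not be symmetric. The only steps that need a little care are the routine verifications that $A$ is Gorenstein of embedding dimension exactly $v-d$ and that enlarging the residue field is harmless, and the bookkeeping in translating the two displayed inequalities into the four stated ones. Note that, in contrast to Theorem \ref{e(R)}, these bounds should not be expected to be sharp once $v\ge d+2$ --- not every Artinian Gorenstein local ring is ``compressed'' --- but only the upper bound is needed here.
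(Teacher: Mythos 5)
Your proposal is correct and follows essentially the same route as the paper: pass to the Artinian Gorenstein reduction $B=R/J$, use Theorem \ref{BS} to get $\nn^{s+1}=0$, apply Matlis self-duality in the form $l(0:_B\nn^{t})=l(B/\nn^{t})$ together with $\nn^{s+1-t}\subset(0:_B\nn^{t})$ to obtain $l(B)\le l(B/\nn^{t})+l(B/\nn^{s+1-t})$, and then bound each summand by counting monomials in $v-d$ variables. Your write-up merely packages the four cases uniformly via the parity of $s$, and the numerical checks all come out to the stated binomial bounds.
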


\begin{proof} We will prove the first statement. All the others follow in exactly the same manner.  We may assume that the residue
field is infinite. Let $J$ be a minimal reduction of the maximal ideal of $R$, and let $B = R/J$, an Artinian Gorenstein ring. By
Theorem \ref{BS}, we know that $\mm^d\subset J$, so that if $\nn$ denotes the maximal ideal of $B$, $\nn^{2r+1} = 0$. 
We estimate the length of the $B$ (which is the multiplicity of $R$). First, observe that since $B$ is Gorenstein, 
$l(B/\nn^t)$ is the same as the length of the Matlis dual module, $\Hom(B/\nn^t,B)$, which is equal to the length of $(0:\nn^t)$.
Therefore,
$$e(R) = l(B) = l(\nn^r) + l(B/\nn^r) \leq l((0:\nn^{r+1})) + l(B/\nn^r) = l(B/\nn^{r+1}) + l(B/\nn^r),$$
where the inequality follows because $\nn^r\subset (0:\nn^{r+1})$. 
Now $B$ has embedding dimension $v-(2r+1)$. As before, $B$ a homomorphic image of
a polynomial ring in  $v-(2r+1)$ variables. As above, the length of a polynomial ring in this many
variables modulo the $(r+1)$st power of its maximal ideal is   $\left(\begin{array}{c} v-r - 1\\  r \end{array}\right)$  , while modulo the $r$th power is   $\left(\begin{array}{c} v-r-2\\ r-1 \end{array}\right)$, giving the statement of part (1). \end{proof}

\begin{rem} Again, the upper bound in (3), (4) is taken by the Stanley-Reisner 
ring of \lq\lq Cyclic Polytopes" (cf. \cite{St}).  
\end{rem}

\begin{ques} As was the case in the last section, it is reasonable to ask the following:
suppose that $(R,\mm)$ is Gorenstein with rational singularity (or F-rational) having the maximal
possible multiplicity given the dimension and embedding dimension. Then
is $gr_{\mm}(R)$ Gorenstein with "symmetric linear"
resolution (i.e., pure resolution with degree sequence $(n, n+1, \ldots , n+s, 2n+s)$?\end{ques}

\begin{acknowledgement}
We are grateful to Naoki Terai for suggesting  cyclic polytopes as examples. We also
thank Karl Schwede and Wenliang Zhang for valuable communications. We also thank AIM
for hosting the workshop where this question was first discussed, and the main result was proved.
\end{acknowledgement}


\begin{thebibliography}{HMTW}
\bibitem[BH]{BH} W. Bruns and J. Herzog, Cohen-Macauly rings, 
Cambridge University Press, 1997 (revised edition). 

\bibitem[F]{F} H. Flenner, Divisorenklassengruppen quasihomogener Singularit\"aten,  J. Reine Angew. Math. {\bf 328} (1981), 128???-160. 

\bibitem[F2]{F2} H. Flenner,  Die S\" atze von Bertini f\" ur lokale Ringe,  Math. Ann. {\bf 229} , (1977), 97--111.

\bibitem[GW]{GW} S. Goto, K. Watanabe, The Structure of One-Dimensional 
F-pure Rings, J. of Algebra, {\bf 47} (1977).

 \bibitem[He]{He} 
S.~Helmke, On Fujita's Conjecture, Duke Math. \textbf{88} (1997), 201--216.



\bibitem[HH]{HH} 
M.~Hochster and C.~Huneke, Tight closure, invariant theory and the
Brian\c{c}on-Skoda theorem, J. Amer. Math. Soc. \textbf{3} (1990), 31--116.

\bibitem[Hu]{Hu} 
 C.~Huneke,  Hilbert function and symbolic powers, Michigan Math. J. 
\textbf{34} (1987), 293--318.

\bibitem[HS]{HS} C. Huneke and I. Swanson, Integral closure of ideals, rings, and modules, London Math. Soc. Lecture Notes {\bf 336} (2006), Cambridge University Press,  Cambridge, UK.

\bibitem[Li]{Li}  J.~Lipman,  
 Rational singularities with applications to algebraic surfaces 
and unique factorization, 
 Inst. Hautes \'Etudes Sci. Publ. Math. \textbf{ 36}  (1969),   195--279.  


\bibitem[LT]{LT}      J. Lipman and B. Teissier, 
 Pseudo-rational local rings and a theorem of
Brian\c con-Skoda about integral closures of ideals, 
   Michigan Math. J. \textbf{28}  (1981),   97-116. 
   
\bibitem[ST]{ST} K. Schwede and K. Tucker, On the number of compatibly Frobenius split subvarieties, prime F-ideals, and log canonical centers, Ann. Inst. Fourier (Grenoble) \textbf{60} (2010), 1515--1531.

\bibitem[SZ]{SZ} K. Schwede and W. Zhang, Bertini theorems for F-singularities, preprint
arXiv:1112.2161

\bibitem[St]{St}      R. ~Stanley, The Upper Bound Conjecture and Cohen-Macaulay 
Rings, Stud. Appl. Math. \textbf{54} (1975), 135-142.

\bibitem[Wa]{Wa} K. Watanabe, Rational singularities with k*-action. Commutative algebra (Trento, 1981), 339--351, Lecture Notes in Pure and Appl. Math., {\bf 84}, Dekker, New York, (1983). 

\end{thebibliography}
\end{document}